\title{Extensions between Verma modules for dihedral groups}
\author{Gurbir Dhillon and Visu Makam}
\newtheorem{theorem}{Theorem}[section]
\newtheorem{lemma}[theorem]{Lemma}
\newtheorem{corollary}[theorem]{Corollary}
\newtheorem{proposition}[theorem]{Proposition}
\theoremstyle{definition}
\newtheorem{definition}[theorem]{Definition}
\newtheorem{remark}[theorem]{Remark}
\newcommand{\Hom} {\operatorname{Hom}}
\newcommand{\gHom} {\operatorname{gHom}}
\newcommand{\End} {\operatorname{End}}
\newcommand{\Ext} {\operatorname{Ext}}
\newcommand{\modf}{\operatorname{mod^{fg}}}
\newcommand{\gmodf}{\operatorname{gmod^{fg}}}
\newcommand{\Z} {\mathbb Z}
\newcommand{\gExt} {\operatorname{gExt}}
\newcommand{\SB} {\mathcal{SB}}
\newcommand{\op} {\operatorname{op}}
\newcommand{\SM}{\mathcal{SM}}
\newcommand{\gr}{\operatorname{gr}}
\newcommand{\C} {\mathcal{C}}
\newcommand\iso{\xrightarrow{
   \,\smash{\raisebox{-0.65ex}{\ensuremath{\scriptstyle\sim}}}\,}}
\thanks{The first author was partially supported by NDSEG fellowship. The second author was supported by NSF grant DMS-1601229}
\begin{document}

\maketitle

\begin{abstract}
Computing the extensions between Verma modules is in general a very difficult problem. Using Soergel bimodules, one can construct a graded version of the principal block of Category $\mathcal{O}$ for any finite coxeter group. In this setting, we compute the extensions between Verma modules for dihedral groups.
\end{abstract}

\section{Introduction}
One way to construct the finite dimensional irreducible representations of a semisimple lie algebra is to construct them as the unique simple quotients of a Verma modules corresponding to dominant integral weights. Verma modules themselves are not finite dimensional, and the natural setting to study them is a category that is commonly referred to as Category $\mathcal{O}$. We are interested in computing the extensions between Verma modules. The Gabber-Joseph conjecture states that the dimensions of extensions of Verma modules are given by R-polynomials. This conjecture was however disproved by Boe in \cite{Boe}, and in \cite{Abe}, Abe studies how far the dimensions of the first extension groups differ from these polynomials. Very little is known about higher extension groups.

The principal block $\mathcal{O}_0$ of Category $\mathcal{O}$ (see Section~\ref{alt}) is a highest weight category, and the underlying poset is the Weyl group with partial order given by reverse bruhat order. The Verma modules in this principal block are the standard objects. Indeed, using Soergel's ideas, one can construct a category equivalent to $\mathcal{O}_0$ using only the coxeter presentation of the Weyl group. This construction (see Section~\ref{sb}), yields $\mathcal{O}_0$ as the category of finitely generated modules over an algebra. This algebra is in fact a graded algebra, and all the important modules (including Verma modules) admit graded lifts, allowing us to consider the category of finitely generated graded modules over this graded algebra as a graded version of $\mathcal{O}_0$. 
For two objects $M$ and $N$ in a graded abelian category, $\Ext^j(M,N)$ is graded, and we denote the $i^{th}$ graded part of the $j^{th}$ extension group by $\gExt^j_i(M,N)$. We state the main result of this paper. 

\begin{theorem} \label{main}
Let $(W,S)$ be a dihedral group. Let $x,y \in W$, and let $i,j \in \mathbb Z$ such that $l(x) - l(y) = 2j+i \geq 0$. Then we have 
$$\rm \dim \gExt^j_i(\Delta_x,\Delta_y) = \begin{cases} 2 & \text{if } j>0, i+j >0 \\ 1 & \text{if } j(i+j) = 0\end{cases}.$$

In all other cases, the dimension is zero.
\end{theorem}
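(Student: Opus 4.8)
The strategy is to compute $\gExt^j_i(\Delta_x,\Delta_y)$ from an explicit graded projective resolution of $\Delta_x$, exploiting that every Kazhdan--Lusztig polynomial of a dihedral group is trivial, so that all the combinatorics in sight is as simple as possible.

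First I would reduce to the interesting case. By the highest weight structure of $\mathcal{O}_0$ --- a projective resolution of $\Delta_x$ involves only indecomposable projectives $P_\nu$ with $\nu\le x$ in the Bruhat order, while $\gHom(P_\nu,\Delta_y)\neq 0$ forces $y\le\nu$ --- we have $\gExt^j_i(\Delta_x,\Delta_y)=0$ unless $y\le x$, so assume $y\le x$. Then I would record two facts special to dihedral groups: (i) the graded decomposition multiplicities of Verma modules are as simple as possible, $[\Delta_w:L_z]_p=1$ if $z\ge w$ and $p=l(z)-l(w)$, and $0$ otherwise; and (ii) the Bruhat interval $[y,x]$ has a \emph{uniform} shape --- one element of length $l(y)$ (namely $y$), exactly two elements of each length strictly between $l(y)$ and $l(x)$, and one element of length $l(x)$ (namely $x$) --- which is precisely the phenomenon behind the final answer depending only on $l(x)-l(y)$.

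Next I would construct the minimal graded projective resolution $\cdots\to P^{(1)}\to P^{(0)}\to\Delta_x\to 0$. Using BGG reciprocity (so $P_w$ has a $\Delta$-flag with subquotients $\Delta_\nu$, $\nu\le w$, each once) together with induction on $l(x)$, one shows that $\Delta_x$ has projective dimension $l(x)$ and that
$$P^{(j)}\ \cong\ \bigoplus_{\nu\le x,\ l(\nu)=l(x)-j}P_\nu\langle j\rangle,$$
so in particular the resolution is \emph{linear}: $P^{(j)}$ is generated in internal degree $j$. Applying $\gHom(-,\Delta_y)$ and using (i), the contribution of a summand $P_\nu$ of $P^{(j)}$ to $\gHom(P^{(j)},\Delta_y)$ lies purely in internal degree $i=(l(\nu)-l(y))-j=l(x)-l(y)-2j$; hence, in each \emph{fixed} internal degree $i$, the complex $\gHom(P^{(\bullet)},\Delta_y)$ is concentrated in the single cohomological degree $j=\tfrac12(l(x)-l(y)-i)$, its differentials vanish for degree reasons, and therefore
$$\gExt^j_i(\Delta_x,\Delta_y)\ \cong\ \gHom\big(P^{(j)},\Delta_y\big)_i\ \ \text{when}\ \ 2j+i=l(x)-l(y),$$
and vanishes otherwise.

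Finally I would read off the dimension: by (i), $\dim\gHom(P^{(j)},\Delta_y)_i$ is the number of $\nu\in[y,x]$ of length $l(x)-j$, which by the uniform shape (ii) equals $1$ when $j=0$ or $j=l(x)-l(y)$ (i.e.\ $\nu=x$ or $\nu=y$), equals $2$ when $0<j<l(x)-l(y)$, and is $0$ once $j>l(x)-l(y)$; rewriting $i+j=(l(x)-l(y))-j$ puts these exactly in the cases $j(i+j)=0$ and $j>0,\ i+j>0$ of the theorem. As a consistency check, the graded Euler form gives $\sum_j(-1)^j\dim\gExt^j_\bullet(\Delta_x,\Delta_y)=\delta_{x,y}$ (since $[\nabla_y]=[\Delta_y]$ in the Grothendieck group, while the classes of standard and costandard objects are dual under the Euler form), which matches the alternating sum $1-2+2-\cdots\pm1$ collapsing for $x\neq y$. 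The main obstacle will be the construction in the middle paragraph: producing the minimal projective resolution of $\Delta_x$ explicitly and proving its linearity with the stated Betti numbers (the standard Koszulity of $\mathcal{O}_0$, here to be verified by hand for dihedral groups). Everything else is bookkeeping; in particular the vanishing of the differentials --- where the content of the ``$2$ versus $1$'' dichotomy might have been expected to lie --- turns out to be automatic once linearity is in hand.
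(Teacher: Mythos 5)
Your proposal is correct in substance and follows the same skeleton as the paper: the same linear graded projective resolution $P^j=\bigoplus_{\nu\le x,\ l(x)-l(\nu)=j}P_\nu(-j)$ of $\Delta_x$, the observation that the differentials in $\gHom(P^\bullet,\Delta_y)$ vanish, and the final count of summands $P_\nu$ with $y\le\nu\le x$, $l(\nu)=l(x)-j$. The genuine difference is in how the vanishing of the differentials is established. You argue purely by internal degree: since graded decomposition numbers are concentrated in a single degree (trivial Kazhdan--Lusztig polynomials), $\gHom(P^j,\Delta_y)$ sits entirely in internal degree $l(x)-l(y)-2j$, so consecutive terms of the Hom-complex live in different internal degrees and the (degree-preserving) differentials must be zero. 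The paper instead proves a structural statement: every map $P_x\rightarrow\Delta_y(i)$ factors through $\Delta_x$ (via a computation of $\gHom(\Delta_x,\Delta_y(i))$ using the weight filtration of Rouquier complexes), whence any composite $P_z\rightarrow P_x(i)\rightarrow\Delta_y(j)$ with $l(z)<l(x)$ vanishes by comparing composition factors. Your degree argument is shorter and quite clean; the paper's route has the side benefit of computing $\gHom(\Delta_x,\Delta_y(i))$ and yielding injectivity of nonzero maps between Vermas, which are of independent interest. Both arguments consume the same inputs (the linear resolution and the single-degree multiplicity statement, which is the paper's Lemma on the weight filtration).

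The one soft spot is the middle step you yourself flag: producing the minimal linear resolution with the stated Betti numbers. BGG reciprocity plus induction on $l(x)$ constrains the multiplicities but does not by itself give the grading shifts or minimality --- that is essentially standard Koszulity, and proving it by hand is real work. The paper simply quotes Sauerwein's explicit resolution for dihedral groups (\cite{Sau}), and you could (and probably should) do the same; with that citation in place, the rest of your argument goes through as written.
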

We define a generating function, $e(x,y) = (\dim \gExt^j_i(\Delta_x,\Delta_y))q^jt^i$.

\begin{corollary}
Let $r = l(x) - l(y) \geq 0$. Then we have $$e(x,y) = t^r + 2qt^{r-2} + 2q^2t^{r-4} + \dots + 2q^{r-1}t^{-(r-2)} + q^rt^{-r}.$$
\end{corollary}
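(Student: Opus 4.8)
The plan is to extract the coefficients of $e(x,y)$ one monomial at a time from Theorem~\ref{main}. Set $r = l(x) - l(y) \geq 0$. By the theorem, $\dim \gExt^j_i(\Delta_x,\Delta_y)$ can be nonzero only when $2j + i = r$; equivalently the support of the generating function lies on the line $i = r - 2j$, so I only need to compute the coefficient of $q^j t^{r-2j}$ for each $j \in \Z$.

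First I would pin down which values of $j$ contribute. When $j < 0$, the constraint $2j+i = r \geq 0$ forces $i + j = r - j > 0$, so we are in neither clause of the piecewise formula (the first requires $j > 0$, the second requires $j(i+j) = 0$), and the coefficient is $0$. When $j > r$, we get $i + j = r - j < 0$, and again both clauses fail, so the coefficient is $0$. Hence only $0 \leq j \leq r$ survive.

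For the surviving range I would read off the three cases. At $j = 0$ we have $j(i+j) = 0$, so the coefficient is $1$, contributing $t^r$. For $1 \leq j \leq r-1$ we have $j > 0$ and $i + j = r - j > 0$, so the first clause applies and the coefficient is $2$, contributing $2q^j t^{r-2j}$. At $j = r$ we have $i + j = 0$, so $j(i+j) = 0$ and the coefficient is $1$, contributing $q^r t^{-r}$. Adding these up gives exactly
$$e(x,y) = t^r + 2q t^{r-2} + 2q^2 t^{r-4} + \dots + 2q^{r-1} t^{-(r-2)} + q^r t^{-r}.$$

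There is essentially no obstacle here; the real content is already contained in Theorem~\ref{main}, and the corollary is just the substitution $i = r - 2j$ together with bookkeeping at the two endpoints $j = 0$ and $j = r$. The one point I would be careful to state is the degenerate case $r = 0$, in which $j = 0$ and $j = r$ coincide and the first and last terms of the displayed expression collapse into the single term $1$, so that $e(x,y) = 1$, consistent with the general formula read with that understanding.
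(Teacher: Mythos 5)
Your proposal is correct and matches the paper's (implicit) reasoning: the corollary is stated without separate proof precisely because it is the direct expansion of Theorem~\ref{main} along the line $i = r - 2j$, with coefficient $1$ at the endpoints $j=0,r$ and $2$ for $0 < j < r$, exactly as you compute. Your remark about the collapse at $r=0$ is a reasonable extra bookkeeping point not spelled out in the paper.
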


\section{Category $\mathcal{O}$} \label{catO}
In this section, we will briefly recall Category $\mathcal{O}$ associated to a semisimple lie algebra $\mathfrak{g}$, and describe how the principal block $\mathcal{O}_0$ may be viewed as a category of modules over a graded algebra. In the subsequent sections, we will then use this description to define a Category $\mathcal{O}_0$ for any coxeter group. 

Let $\mathfrak{g}$ denote a semisimple lie algebra, and $\mathfrak{h}$ a choice of Cartan subalgebra. Let $\Phi \subseteq \mathfrak{h}^*$ denote its root system, and let $W$ denote the Weyl group. A choice of a generic hyperplane in $\mathfrak{h}^*$ allows us to split the root system $\Phi = \Phi_+ \cup \Phi_-$ into positive roots and negative roots. Let $\mathfrak{g} = \mathfrak{n}^- \oplus \mathfrak{h} \oplus \mathfrak{n}^+$ be the corresponding Cartan decomposition, and let $\mathfrak{b} = \mathfrak{h} \oplus \mathfrak{n}^+$ denote the corresponding Borel subalgebra. We will denote by $\mathcal{U}\mathfrak{g}$ (resp. $\mathcal{U} \mathfrak{b}$), the universal enveloping algebra of $\mathfrak{g}$ (resp. $\mathfrak{b}$). 

\begin{definition}
Category $\mathcal{O}$ is the full subcategory of $\mathcal{U}\mathfrak{g}$ modules whose objects satisfy

\begin{enumerate}
\item $M$ is $\mathfrak{h}$-semisimple;
 \item $M$ is finitely generated as a $\mathcal{U}(\mathfrak{g})$ module;  
\item $M$ locally $\mathfrak{n}^+$ finite.
\end{enumerate}
\end{definition}

For each $\lambda \in \mathfrak{h}^*$, we have a Verma module $\Delta_{\lambda} := \mathcal{U}\mathfrak{g} \otimes_{\mathcal{U}\mathfrak{b}} \mathbb C_{\lambda}$, whose unique simple quotient $L_{\lambda}$ is the simple module of highest weight $\lambda$. The projective cover of $L_{\lambda}$ is denoted $P_{\lambda}$, and its injective hull is denoted $I_{\lambda}$. We define the shifted action of $W$ on $\mathfrak{h}^*$ as $w \cdot \lambda = w(\lambda + \rho) - \rho$.

Category $\mathcal{O}$ splits into blocks, i.e., $$\mathcal{O} = \bigoplus_{\mathcal{X} \in \mathfrak{h}/(W\cdot)}\mathcal{O}_\mathcal{X}.$$

The principal block $\mathcal{O}_0 = \mathcal{O}_{(W\cdot 0)}$ is the block associated to the orbit of $0 \in \mathfrak{h}^*$ under the shifted action. For each $w \in W$, we define $L_w := L_{w\cdot 0}$. These are all the simple modules in $\mathcal{O}_0$. Similarly, we define $\Delta_w:= \Delta_{w\cdot 0}$, and $P_w = P_{w\cdot 0}$, etc. There is a duality functor $\mathbb{D}$ on $\mathcal{O}$, and we write $\mathbb{D}(\Delta_\lambda) := \nabla_\lambda.$

For any ring $A$, we denote by $A-{\rm mod}^{\rm fg}$, the category of finitely generated left modules over $A$. The direct sum of indecomposable projectives $P = \bigoplus_{w \in W} P_w$  is a projective generator in $\mathcal{O}_0$, so by Morita theory, we have an equivalence of categories:
\begin{align*} \mathcal{O}_0 & \iso \End(P)^{\op}-{\rm mod}^{\rm fg} \\ M & \longmapsto  \Hom(P,M) \end{align*}

\section{Soergel Bimodules} \label{sb}
The results on Soergel bimodules that we use can all be found in \cite{EW,Soergel}.

\subsection{Grading conventions} Before we proceed further, it will be important to fix our grading conventions to avoid any confusion.
For a graded module $M = \oplus_{i \in \Z} M_i$, we define its graded shift by $M(n)$ by $M(n)_i = M_{i+n}$. If $M$ and $N$ are graded modules, then let $\gHom(M,N) = \Hom_0(M,N) = \{f \in \Hom(M,N)\ |\ f(M_k) \subseteq N_k\}$. If we define $\gHom_i(M,N) =\{f \in \Hom(M,N) \ |\ f(M_k) \subseteq N_{i+k}\ \forall k\}$. In particular, we have $\gHom_i(M,N) = \gHom (M(-i),N) = \gHom (M,N(i))$.

Let $A$ be a graded ring. By $A-\gmodf$, we mean finitely generated graded modules over $A$ where the morphims between two modules $M$ and $N$ are $\gHom(M,N)$.

\subsection{Soergel bimodules} Let $(W,S)$ be a finite coxeter system and let $\mathfrak{h}^*$ denote its geometric representation. Let $R = \rm Sym(\mathfrak{h})$ denote the symmetric algebra over $\mathfrak{h}$, or equivalently the ring of polynomial functions on $\mathfrak{h}^*$. The ring $R$ is a polynomial ring, and hence naturally $\Z$-graded. We wish to consider $R$ as an evenly graded algebra. In other words, we adopt the convention that the linear polynomials are in degree $2$, i.e., $R_2 = \mathfrak{h}$. 
 
For $s\in S$, let $B_s$ denote the R-bimodule $R \otimes_{R^s} R(1)$. We define the category $\SB$ of Soergel bimodules is the smallest additive Karoubian category containing all the $B_s$, and is stable under arbitrary shifts. In other words, it is the full subcategory of $R$-bimodules whose objects are $R$-bimodules that are isomorphic to direct sums of graded shifts of direct summands of bimodules of the form $B_s \otimes B_t \otimes \dots \otimes B_u$ for $s,t,\dots,u \in I$. The indecomposable objects in $\SB$ are indexed by the Weyl group $W$. Indeed, for any reduced expression $w = st\dots u$, $B_w$ can be characterized as the unique summand of $B_s \otimes B_t \otimes \dots \otimes B_u$ that has not appeared for any smaller expression. 

The category $\SM$ of Soergel modules is the category of right $R$-modules obtained by tensoring the objects in $\SB$ on the left by $(R/R_+)$ over $R$. In other words, the indecomposable objects in $\SM$ (up to grading shifts) are $\overline{B_w} := (R/R_+) \otimes_R B_w$. Indeed, it was shown by Soergel that $\Hom_{\SM}(\overline{B_x},\overline{B_y}) = \Hom_{\SB}(B_x,B_y)$.

\subsection{Category $\mathcal{O}$} If $(W,S)$ is the Weyl group of a semisimple lie algebra $\mathfrak{g}$, then we can define the principal block $\mathcal{O}_0$ of $\mathfrak{g}$ in terms of Soergel modules. We define $\overline{B} = \bigoplus_{w \in W} \overline{B_w}$, and let $\mathcal{A}_{W} = \End(\overline{B})$. Soergel shows that we have an isomorphism $A_W = \End(P)$. As we have seen, $\mathcal{O}_0$ is equivalent to $\End(P)^{\op}-\modf$, which is $A_W^{\op}-\modf$. The ring $A_W$ is naturally graded, and all the important modules (Verma, projective, simple etc) all have graded lifts. Thus we define $\mathcal{O}_0^{\gr} := A_W^{\op}-\gmodf$ as the graded version of Category $\mathcal{O}_0$. However, Soergel modules are naturally defined for any coxeter group, and we will use this to give a definition of $\mathcal{O}_0^{\gr}$ for any finite coxeter group.

\begin{definition}
We define $\mathcal{O}_0^{\rm gr}$ to be the category of finitely generated graded modules over $A_W^{\op}$.
\end{definition}

\subsection{Jantzen filtration}
In this section, let $\mathfrak{g} \supset \mathfrak{b} \supset \mathfrak{h}$ be a semisimple lie algebra with a choice of borel subalgebra and a choice of maximal torus. Let $(W,S)$ be the corresponding Weyl group. There is a filtration on the Verma module $\Delta_x = \Delta_x^0 \supseteq \Delta_x^1 \supseteq \Delta_x^2 \dots$, with $\Delta_x^i = 0$ for large enough $i$ called the Jantzen filtration. In fact, this coincides with the socle filtration and the radical filtration, and moreover, the layers $\Delta_x^k/\Delta_x^{k+1}$ are semisimple, see \cite[Chapter~8]{Humphreys}.

The counterpart of the Jantzen filtration in geometry is the weight filtration. In order to see this filtration, we must give another construction (the Koszul dual construction), and we do so in the next section.

\section{Another construction of $\mathcal{O}_0^{\gr}$} \label{alt}
We give another construction of $\mathcal{O}_0^{\gr}$ as the heart of a perverse $t$-structure on the bounded homotopy category of Soergel modules where the Jantzen filtration is far more apparent.  

Let $(W,S)$ denote a finite coxeter group, and let $\SM$ denote the corresponding category of Soergel modules. Consider the bounded homotopy category of Soergel modules $K^b(\SM)$. In what follows, $(i)$ denotes the grading shift by $i$ and $[i]$ denotes the cohomological shift by $[i]$. We define the tate twist $\left<1 \right> := (-1)[1]$. That Soergel's conjecture is true for the geometric realization allows one to define a certain perverse $t$-structure, and its corresponding heart gives an abelian subcategory $\C$ of $K^b(\SM)$. The category $C$ consists of bounded complexes $M^\bullet = \dots \rightarrow M^0 \rightarrow M^1 \dots$ ($M^0$ is in cohomological degree $0$), such that $M^i = \oplus_{w \in W} B_w(i)^{m(w,i)}$. The simple modules in $\mathcal{C}$ are $B_w\left<i\right>$ for $w \in W$ and $i \in \Z$.

In \cite{Mak}, the category $\mathcal{C}$ is proven to be a graded highest weight category, whose underlying poset is $W$ with the reverse bruhat order. In particular, for each $w \in W$, we have a simple module $L_x$, a projective cover of $L_x$ denoted $P_x$, a standard object (Verma module) $\Delta_x$ etc. Let $\mathcal{P} = \bigoplus_{x \in W} P_x$. Consider the ring $\End(\mathcal{P})$. In \cite{Mak}, it is shown that $\End(\mathcal{P})$ is Koszul dual to $A_W$, and that $\C = \End(\mathcal{P})^{\op}-\modf$. In \cite{Mak2}, it is shown that $A_W$ is Koszul self dual. Putting the two results together, we get that $\mathcal{O}^{\rm gr} \iso \C$. Under this isomorphism, the grading shift $(i)$ gets sent to $\left<i\right>$.

\begin{remark}
The standard objects (Verma modules) in this construction of $\mathcal{O}_0^{\gr}$ correspond to Rouquier complexes, see \cite{Mak2}. We will only need rouquier complexes at one point during our computation, at which point we will refer to an explicit description of the rouquier complexes for dihedral groups, which can be found, for e.g. in \cite{AMRW}. 
\end{remark}

\begin{remark}
In this description, the Jantzen filtration is far more apparent, and is given by successive truncations of the complex. 
\end{remark}

\section{Proof of main result}
Let $(W,S)$ denote a dihedral group. In this section, by $\mathcal{O}_0^{\gr}$, we will mean the category defined in Definition~\ref{defO}. It is sometimes advantageous to consider the equivalent category $\C$ constructed in Section~\ref{alt}, and we will do so freely. We know that $\mathcal{O}_0^{\gr} = \C$ is a graded highest weight category for the poset $W$ with reverse bruhat order, and hence for each $x \in W$, we have the objects $L_x,P_x,\Delta_x$ (simple, projective, standard). We assume that the reader is familiar with graded highest weight categories, and we refer to \cite{AR} for those who aren't. In \cite{Sau}, an explicit projective resolution for $\Delta_x$ is given.

\begin{proposition} \label{proj.res} 
Let $x \in W$, and let $P^j := \bigoplus\limits_{\substack{y\leq x \\ l(x) - l(y) = j}} P_y (-j)$. Then $$P^{l(x)} \rightarrow P^{l(x) - 1} \rightarrow \dots \rightarrow P^0 \rightarrow 0$$ is a projective resolution of $\Delta_x$. 

\end{proposition}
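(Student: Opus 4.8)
The plan is to verify that the claimed complex is exact by an inductive argument on the Bruhat order, exploiting the structure of a graded highest weight category. First I would recall what characterizes $\Delta_x$: in a highest weight category, $\Delta_x$ has simple head $L_x$, and all other composition factors $L_y$ satisfy $y < x$ in the Bruhat order (equivalently $y > x$ in the reverse Bruhat order that orders the highest weight structure, so with the conventions here $l(y) < l(x)$). The key numerical input from the dihedral case is that multiplicities are controlled: for dihedral groups the Kazhdan--Lusztig combinatorics is elementary, so $[\Delta_x : L_y] = 1$ whenever $y \leq x$ and is $0$ otherwise, and more precisely the graded decomposition number records $L_y$ appearing exactly once in graded degree $l(x) - l(y)$. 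This is what makes each term $P^j$ a sum of $P_y(-j)$ over the $y$ at Bruhat-distance $j$ with the single shift $(-j)$.

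The core of the argument is to exhibit the differentials and check exactness. I would proceed as follows. Let $C_\bullet$ denote the proposed complex with $C_j = P^j$, augmented by $C_{-1} = \Delta_x$ via the canonical surjection $P^0 = P_x \twoheadrightarrow \Delta_x$ (the projective cover of $L_x$ maps onto $\Delta_x$ since $\Delta_x$ has head $L_x$). To build the maps, I would use that in a graded highest weight category $\operatorname{Hom}(P_y, P_z)$, and hence the possible components of a differential $P^{j} \to P^{j-1}$, are governed by $(\Delta$-flag multiplicities$)$ of the $P_y$, which for dihedral groups are again multiplicity-free; BGG reciprocity then pins down $[P_y : \Delta_w] = [\nabla_w : L_y] = [\Delta_w : L_y]$, giving $P_y$ a $\Delta$-flag with subquotients $\Delta_w$ for $w \geq y$, each once. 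I would then either (a) write the complex down via these explicit $\operatorname{Hom}$-spaces and verify $d^2 = 0$ and exactness directly using the dihedral combinatorics, or, more cleanly, (b) reduce to a statement purely about $\Delta$-flags: splice the complex by kernels, set $Z_j = \ker(C_j \to C_{j-1})$, and show by downward induction that $Z_j$ has a $\Delta$-flag with subquotients exactly the $\Delta_y$ for $y < x$ with $l(x) - l(y) > j$, with the appropriate grading shift, so that $Z_j \hookrightarrow P^j = \bigoplus P_y(-j)$ is precisely the inclusion of the sub-$\Delta$-flag complementary to the top layer $\bigoplus_{l(x)-l(y)=j}\Delta_y(-j)$. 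The base case $j = l(x)$ gives $Z_{l(x)} = 0$ since the longest element $w_0$ contributes $P^{l(x)} = P_{w_0}(-l(x)) = \Delta_{w_0}(-l(x))$ and this must inject as the bottom of the flag; the top case recovers $C_0/Z_0 = \Delta_x$.

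The main obstacle, and the step that genuinely uses that $(W,S)$ is dihedral, is controlling the $\Delta$-flag of each projective $P_y$ and checking that the connecting maps assemble into a complex whose homology vanishes — i.e., that there is no ``cancellation failure.'' For a general highest weight category the terms of such a Koszul-type resolution need not be this simple (the shifts need not all be $(-j)$, and multiplicities can exceed one), so the proposition is really asserting that dihedral $\mathcal{O}_0^{\gr}$ is as rigid as possible. Concretely I expect to need: (i) the explicit graded characters $\operatorname{ch}\Delta_x = \sum_{y \leq x} q^{l(x)-l(y)}$ and $\operatorname{ch} P_y$, both read off from the (trivial) dihedral Kazhdan--Lusztig polynomials; (ii) a dimension count showing $\sum_j (-1)^j \operatorname{ch} P^j = \operatorname{ch}\Delta_x$, which forces exactness once one knows the complex is a complex and each $H_j$ for $j>0$ would have to be a highest weight subquotient of the right shape — and an Euler-characteristic plus positivity argument rules out nonzero homology. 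I would therefore organize the write-up as: recall dihedral KL data, compute $\operatorname{ch} P^j$, define the differentials through the multiplicity-free $\operatorname{Hom}$-spaces (citing \cite{Sau} for the explicit maps), verify $d^2=0$, and close with the character identity forcing exactness.
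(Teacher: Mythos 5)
Note first that the paper itself offers no argument for this proposition: it is imported wholesale from Sauerwein \cite{Sau} (``In \cite{Sau}, an explicit projective resolution for $\Delta_x$ is given''), so your attempt is really being measured against that reference rather than against anything in this paper. Your overall skeleton --- splice the complex by kernels $Z_j$, use BGG reciprocity and the multiplicity-one dihedral combinatorics to show each $Z_j$ is $\Delta$-filtered by the standards of length $< l(x)-j$ with a uniform shift, and conclude exactness --- is the right shape of argument. But as written your numerical inputs are inverted relative to the conventions actually in force here, and this is not cosmetic. In this paper the poset is $W$ with \emph{reverse} Bruhat order, and the lemma following the proposition says $[\Delta_y : L_x(-i)]=1$ exactly when $l(x)=l(y)+i$: composition factors of $\Delta_x$ are $L_y$ with $y\geq x$ (Bruhat), not $y\leq x$ as you assert; dually, BGG reciprocity gives $P_y$ a $\Delta$-flag with subquotients $\Delta_w$ for $w\leq y$, not $w\geq y$; and the last term of the resolution is $P_e(-l(x))$, which equals $\Delta_e(-l(x))$ because $e$ is maximal in the reverse Bruhat order --- not $P_{w_0}(-l(x))$, and in your stated convention $P_{w_0}\neq\Delta_{w_0}$, so your base case fails. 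With your multiplicities taken literally, the syzygies of $\Delta_x$ would be covered by projectives $P_w$ with $w>x$, i.e.\ the complex in the proposition would not even have the terms $P_y$, $y\leq x$, so the induction could not start; the plan only becomes coherent after flipping every such statement.

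The second gap is that neither of your proposed mechanisms for exactness is a proof. A character identity $\sum_j(-1)^j\operatorname{ch}P^j=\operatorname{ch}\Delta_x$ constrains only the alternating sum of the homology; ``positivity'' does not rule out nonzero $H_j$ cancelling in consecutive degrees, so option (ii) proves nothing by itself. The splicing route (b) can work, but the substantive steps are exactly the ones you leave unargued: you must construct the differentials and show that each map $P^j\rightarrow Z_{j-1}$ is a projective cover, i.e.\ that the head of $Z_{j-1}$ is precisely $\bigoplus_{l(x)-l(y)=j}L_y(-j)$ and contains no simples indexed by shorter elements; knowing the class of $Z_{j-1}$ in the Grothendieck group, or even that it is $\Delta$-filtered (which does follow from the standard $\Ext$-vanishing against costandards), does not give this. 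Controlling the head is where the genuine dihedral input (rigidity of the graded structure, or an explicit computation with the algebra $A_W$) enters, and deferring ``the explicit maps'' to \cite{Sau} concedes precisely that content --- which is, in effect, what the paper itself does by citing \cite{Sau} for the whole statement.
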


\begin{lemma}
Let $\Delta_y = \Delta_y^0 \supseteq \Delta_y^1 \supseteq \Delta_y^2 \dots$ be the weight filtration. Then 
$$\Delta_y^i/ \Delta_y^{i+1} = \begin{cases} \bigoplus\limits_{l(x) = l(y) + i} \hspace{-10pt} L_x(-i),  & i \geq 1 \\ \hspace{15pt}  L_y, & i = 0 \end{cases}
$$

\end{lemma}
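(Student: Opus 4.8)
The plan is to identify the weight filtration on the standard object $\Delta_y$ via the perverse $t$-structure construction from Section~\ref{alt}, where standard objects are realized as Rouquier complexes. First I would recall, from the cited explicit description of Rouquier complexes for dihedral groups (e.g.\ in \cite{AMRW}), that $\Delta_y$ is represented by a complex of Soergel bimodules whose $i$-th cohomological term, after the appropriate grading shift, involves the indecomposables $B_x$ with $l(x) = l(y) + i$; in the dihedral case these complexes are especially simple because between any two elements of a given length difference the Bruhat interval is very constrained. The weight filtration on the heart $\mathcal{C}$ is, by the remark following the construction, given by the naive (stupid) truncations of this complex, so the associated graded pieces $\Delta_y^i/\Delta_y^{i+1}$ are precisely the perverse cohomologies of the individual terms $M^i(i)$ of the Rouquier complex, viewed as objects supported in a single cohomological degree.

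The key computational step is then to show that each such single-term complex $B_x(i)[?]$, placed in the correct degree, is semisimple in $\mathcal{C}$ — equivalently, that it is a direct sum of simple objects $L_x\langle j\rangle$ — and to pin down the exact shift. Here I would use that the simple objects of $\mathcal{C}$ are exactly the $B_w\langle i\rangle$, together with the identification $\langle i\rangle = (-i)[i]$, so that a term $B_x(i)$ sitting in cohomological degree $i$ is $B_x\langle -i\rangle \cdot (\text{something})$; chasing the conventions carefully yields $L_x(-i)$ for $i \ge 1$ and $L_y$ (no shift) for $i = 0$, matching the claim. For $i=0$ the bottom of the filtration is $\Delta_y$ modulo its radical, which in a highest weight category is $L_y$, giving an independent check of the $i=0$ line.

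The main obstacle I anticipate is bookkeeping: correctly reconciling the three grading/shift conventions in play — the internal grading $(i)$ on Soergel bimodules (with linear terms in degree $2$), the cohomological shift $[i]$, and the Tate twist $\langle i\rangle = (-i)[i]$ — so that the shift in $L_x(-i)$ comes out exactly right rather than off by a sign or a factor of two. A secondary point to verify is that the naive truncations of the Rouquier complex genuinely compute the weight (Jantzen) filtration and that each graded layer is semisimple; this should follow from purity of the Rouquier complex (its terms are pointwise pure of the right weight) together with the description of $\mathcal{C}$ as a perverse heart, but it needs to be stated cleanly. Once the shifts and the semisimplicity are nailed down, the lemma follows immediately by reading off which $B_x$'s appear in each term, which in the dihedral group is just the (at most one or two) elements $x \geq y$ of the prescribed length.
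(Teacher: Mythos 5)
Your proposal is correct and follows essentially the same route as the paper, whose entire proof is a citation to the explicit description of Rouquier complexes for dihedral groups in \cite{AMRW}: you simply spell out what the paper leaves implicit, namely that the weight filtration is the truncation filtration of the Rouquier complex and that a term $B_x(i)$ in cohomological degree $i$ is the simple $B_x\langle -i\rangle$, corresponding to $L_x(-i)$ under the equivalence. The bookkeeping you flag does come out as you expect, so your expanded argument is a faithful (and more detailed) version of the paper's one-line proof.
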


\begin{proof}
This is clear from the explicit description of Rouquier complexes for dihedral groups, see \cite[Chapter~9]{AMRW}. 
\end{proof}

\begin{proposition}
We have $$\dim \gHom(\Delta_x,\Delta_y(i)) = \begin{cases} 1 & \text{if } x \geq y \text{ and } i = l(x) - l(y)  \\ 0 & \text{otherwise} \end{cases}.$$
\end{proposition}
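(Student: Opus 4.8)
The plan is to compute $\gHom(\Delta_x, \Delta_y(i))$ using the two pieces of structural data already assembled: the explicit projective resolution of $\Delta_x$ from Proposition~\ref{proj.res}, and the weight filtration of $\Delta_y$ from the preceding Lemma. Since $\gHom(\Delta_x, -)$ is computed by applying $\gHom(-, \Delta_y(i))$ to the projective resolution $P^{l(x)} \to \cdots \to P^0$, the zeroth cohomology of the resulting complex is $\gHom(\Delta_x, \Delta_y(i))$. But in fact the cleanest route is to first observe that $\gHom(\Delta_x, \Delta_y(i))$ depends only on the head of $\Delta_x$, which is $L_x$, together with the knowledge of which simples appear in $\Delta_y(i)$. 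Concretely, any homomorphism $\Delta_x \to \Delta_y(i)$ kills the radical of $\Delta_x$ on the source side after composing with a quotient, so it factors through a nonzero map only if $L_x$ occurs as a submodule of some subquotient of $\Delta_y(i)$; more precisely one uses that in a highest weight category $\gHom(\Delta_x, M)$ has dimension equal to the multiplicity of $L_x$ in the socle of $M$ when $M$ has a $\Delta$-flag of the right shape, but here it is cleaner to argue directly.

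First I would reduce to the case $x \geq y$: if $x \not\geq y$, then $L_x$ does not appear in any composition factor of $\Delta_y$ (standard fact in the highest weight category $\C$, since $[\Delta_y : L_x] \neq 0$ forces $x \leq y$ in the reverse Bruhat order, i.e. $x \geq y$ in Bruhat order), hence $\gHom(\Delta_x, \Delta_y(i)) \hookrightarrow \gHom(L_x, \Delta_y(i))$ — wait, more carefully: a nonzero map $\Delta_x \to \Delta_y(i)$ has image a nonzero quotient of $\Delta_x$, which has $L_x$ in its head, so $L_x$ is a composition factor of $\Delta_y(i)$, forcing $x \geq y$. So assume $x \geq y$ and set $r = l(x) - l(y) > 0$ (the case $x = y$ being immediate since $\Delta_x$ has simple head $L_x$ and $\gEnd(\Delta_x)$ is one-dimensional in degree $0$). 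By the Lemma, $L_x$ appears in $\Delta_y$ exactly once, in the layer $\Delta_y^r/\Delta_y^{r+1}$, with grading shift $(-r)$; equivalently $L_x$ appears in $\Delta_y(i)$ exactly once, and with the correct internal grading to receive a degree-zero map from $\Delta_x$ precisely when $i = r$.

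Next I would upgrade "$L_x$ appears once as a composition factor" to "there is a one-dimensional space of maps $\Delta_x \to \Delta_y(r)$." For the upper bound: any map $\Delta_x \to \Delta_y(r)$ factors through the quotient $\Delta_x \twoheadrightarrow \Delta_x / \Delta_x^{1}$? No — instead, use that the image is a quotient $Q$ of $\Delta_x$ with $L_x$ in its head, and $Q \hookrightarrow \Delta_y(r)$, so $L_x \hookrightarrow \Delta_y(r)$ up to the relevant layer; since $L_x$ has multiplicity one in $\Delta_y$ and sits in the bottom layer $\Delta_y^r/\Delta_y^{r+1}$ — which, being the top of the weight (socle) filtration read from the bottom, is actually the socle contribution — one shows the image must be exactly the copy of $L_x$ sitting in that layer, so $Q = L_x(-r)$ shifted appropriately, giving at most a one-dimensional Hom space. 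For the lower bound, i.e. existence of a nonzero map: here I would use the projective resolution. Applying $\gHom(-,\Delta_y(r))$ to $P^{l(x)} \to \cdots \to P^0 \to 0$, the term in homological degree $0$ is $\gHom(P^0, \Delta_y(r)) = \gHom(P_x, \Delta_y(r))$, and $\gHom(\Delta_x, \Delta_y(r))$ is the kernel of $\gHom(P^0, \Delta_y(r)) \to \gHom(P^1, \Delta_y(r))$. By $\gHom(P_z, N) = $ (graded multiplicity space of $L_z$ in $N$) via projectivity, one has $\dim \gHom(P^0, \Delta_y(r)) = [\Delta_y(r) : L_x]_{\deg 0} = 1$, and one needs to check the single basis element lies in the kernel. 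This I would do by exhibiting the map directly: compose the canonical surjection $\Delta_x \twoheadrightarrow L_x$ — no, $L_x$ is not a submodule of $\Delta_y(r)$ in general. The honest construction is to note that the copy of $L_x(-r)$ in $\Delta_y^r$ generates a submodule of $\Delta_y$, and — this is the step requiring the dihedral structure — one checks using the explicit Rouquier complex description that the submodule of $\Delta_y$ generated by the lowest layer is a quotient of $\Delta_x$, i.e. there is a nonzero map $\Delta_x \to \Delta_y(r)$.

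The main obstacle I anticipate is precisely this last existence argument: knowing $[\Delta_y : L_x] = 1$ gives the Hom-space dimension is $\leq 1$ easily, but producing a genuinely nonzero homomorphism $\Delta_x \to \Delta_y(r)$ requires understanding the submodule structure of $\Delta_y$ near its socle, not just its composition factors. I would handle this by invoking the explicit Rouquier complex model for dihedral groups from \cite{AMRW} (as the preceding Lemma already does) to identify the relevant submodule generated by the bottom layer and check it is cyclic, generated in the degree matching the head of $\Delta_x$; alternatively, one can run the Euler-characteristic / dimension count through the full $\gHom(P^\bullet, \Delta_y(i))$ complex for all $i$ simultaneously, matching it against the known graded characters of $\Delta_y$ and $P_z$, and deduce nonvanishing in degree $0$ by a positivity/support argument — since all $\gExt^{>0}$ contributions and all other Hom-degrees are forced by the highest-weight combinatorics, leaving exactly a one-dimensional $\gHom$ in internal degree $r$. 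In either case, the routine bookkeeping is the graded multiplicity computation $\dim \gHom(P_z, N)_i = [N : L_z(i)]$, and the substantive input is the dihedral-specific module structure.
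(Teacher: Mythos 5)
Your reduction to $x \geq y$ and your upper bound are fine, and they coincide with the first half of the paper's argument: a nonzero map $\Delta_x \to \Delta_y(i)$ has image with head $L_x$, and $\dim \gHom(\Delta_x,\Delta_y(i)) \leq \dim \gHom(P_x,\Delta_y(i))$, which by the weight-filtration Lemma is $1$ exactly when $x \geq y$ and $i = l(x)-l(y) =: r$, and $0$ otherwise. However, two structural assertions you make along the way are false and must not survive into a write-up: the layer $\Delta_y^r/\Delta_y^{r+1}$ is \emph{not} the socle of $\Delta_y$ (the socle is the last layer, a shifted copy of $L_{w_0}$ for the longest element $w_0$; the index $r$ is maximal only when $x = w_0$), and consequently the image of a nonzero map $\Delta_x \to \Delta_y(r)$ is not the simple $L_x$ --- if it were, $L_x$ would embed in the socle. (In fact the image is a full copy of $\Delta_x$; compare the remark in the paper that nonzero maps between Verma modules are injective.) For the same reason your primary plan for the existence half --- use the explicit Rouquier complexes to show that ``the submodule generated by the lowest layer'' is a quotient of $\Delta_x$ --- aims at the wrong submodule (what is needed is that the filtration step $\Delta_y^r$ receives a surjection from, indeed is isomorphic to, $\Delta_x(-r)$), and in any case it is deferred to an unexecuted computation. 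Since producing the nonzero map is precisely the substantive content of the proposition, the proposal as written has a genuine gap.

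That said, your second suggestion closes the gap, and more easily than you anticipate: no Euler characteristic or positivity argument is needed. Apply $\gHom(-,\Delta_y(i))$ to the exact sequence $P^1 \to P^0 \to \Delta_x \to 0$ from Proposition~\ref{proj.res}, so that $\gHom(\Delta_x,\Delta_y(i)) = \ker\bigl(\gHom(P^0,\Delta_y(i)) \to \gHom(P^1,\Delta_y(i))\bigr)$. Using $\dim\gHom(P_z,M) = [M:L_z]$ and the Lemma, the term $\gHom(P^0,\Delta_y(i)) = \gHom(P_x,\Delta_y(i))$ is nonzero only when $x \geq y$ and $i = r$, where it is one-dimensional, while $\gHom(P^1,\Delta_y(i))$ is nonzero only when $i = r-2$, because each $P_z$ occurring in $P^1$ carries the shift $(-1)$ whereas $L_z$ occurs in $\Delta_y$ only with shift $-(r-1)$. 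Hence for $i = r$ the second term vanishes outright and the kernel is all of the one-dimensional space $\gHom(P_x,\Delta_y(r))$, and for all other $i$ (or for $x \not\geq y$) the first term already vanishes; this proves the proposition, and it is not circular, being the $j=0$ instance of the degree bookkeeping behind Corollary~\ref{diff.zero} obtained from a pure grading mismatch rather than from the later vanishing-of-composites proposition. Note that the paper itself takes a different route for existence: it truncates to the Serre subcategory generated by the $L_z$ with $l(z) \geq l(x)$, in which $\Delta_x$ becomes the projective cover of $L_x$, and lifts $\Delta_x \twoheadrightarrow L_x$ through $\Delta_y^r(r) \twoheadrightarrow L_x$, using only the highest weight axioms plus the Lemma, whereas the argument above leans on Sauerwein's resolution. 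Either is acceptable, but you need to commit to one and carry it out.
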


\begin{proof}
We have a surjection $P_x \twoheadrightarrow \Delta_x$, giving us an injection $\gHom(\Delta_x,\Delta_y(i)) \hookrightarrow \gHom(P_x,\Delta_y(i))$. We have $\dim \gHom(P_x,\Delta_y(i)) = [\Delta_y:L_x(i)] = 1$ if and only if $i = l(x) - l(y)$. Hence, we have 
$$\dim \gHom(P_x,\Delta_y(i)) = \begin{cases} 1 & \text{if } x \leq y \text{ and } i = l(x) - l(y)  \\ 0 & \text{otherwise} \end{cases}.$$

Hence it suffices to show that when $x \geq y$ and $i = l(x) - l(y)$, we have a degree $0$ graded map from $\Delta_x$ to $\Delta_y(i)$. Consider the truncated full subcategory generated by all $L_z$ for $l(z) \geq l(x) = l(y) + i$. From the axioms of a graded highest weight category, it follows that since $z$ is maximal in this poset (under reverse bruhat order), that $\Delta_x$ is a projective cover of $L_x$ in this truncated subcategory. Now, consider the weight filtration for $\Delta_y$ and consider the submodule $\Delta_y^i$. It is clear from the previous lemma that $\Delta_y^i$ is in this truncated subcategory, and further that $ [\Delta_y^i(i):L_x] = [\Delta_y^i:L_x(-i)] = 1$. Since $\Delta_x$ is the projective cover of $L_x$ in this truncated subcategory, the canonical projection $\Delta_x \rightarrow L_x$ lifts to a nonzero map $\Delta_x \rightarrow \Delta_y^i(i) \hookrightarrow \Delta_y(i)$.
\end{proof}

\begin{remark}
We know from the representation theory that any nonzero map between Verma modules must be injective. It can be deduced easily from the previous proposition that nonzero maps between Verma modules are injective for dihedral groups as well. The argument relies mostly on the fact that Kazhdan-Lusztig polynomials are trivial for dihedral groups. 

However, there are two finite coxeter groups $H_3$ and $H_4$, which do not arise as the Weyl group of a semisimple lie algebra nor are they dihedral. It is a natural question to ask whether the statement is true in these cases as well. In fact, rouquier complexes can be defined for any coxeter group, and it would be interesting to know whether such a statement holds in this generality.
\end{remark}

The proof of the above proposition tells us that the injection $\gHom(\Delta_x,\Delta_y(i)) \hookrightarrow \gHom(P_x,\Delta_y(i))$ is indeed an isomorphism. We record this as a corollary for later use. 

\begin{corollary} \label{factor}
Any map $\varphi:P_x \rightarrow \Delta_y(i)$ factors through $\Delta_x$.
\end{corollary}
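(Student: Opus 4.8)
The plan is to deduce the corollary from the preceding proposition by a dimension count, making precise the remark that immediately precedes the statement. First I would fix the canonical surjection $\pi \colon P_x \twoheadrightarrow \Delta_x$ arising from the fact that $\Delta_x$ is a quotient of its projective cover in the graded highest weight category $\mathcal{O}_0^{\gr} = \C$. Precomposition with $\pi$ defines, for each internal degree $i \in \Z$, a linear map
$$\pi^* \colon \gHom(\Delta_x, \Delta_y(i)) \longrightarrow \gHom(P_x, \Delta_y(i)),$$
which is injective because $\pi$ is an epimorphism; this is exactly the injection recorded before the corollary.

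Next I would invoke the proposition together with the bound established inside its proof. That proof shows $\dim \gHom(P_x, \Delta_y(i)) = [\Delta_y : L_x(i)] \le 1$, with equality precisely when $x,y$ are comparable in the relevant order and $i = l(x) - l(y)$; and the proposition itself produces, in exactly those cases, a nonzero degree-$i$ map $\Delta_x \to \Delta_y(i)$ (by lifting $\Delta_x \twoheadrightarrow L_x$ along the projective cover in the truncated subcategory), so that $\dim \gHom(\Delta_x, \Delta_y(i)) = 1$ as well. Hence for every $i$ the source and target of $\pi^*$ have the same finite dimension, so the injection $\pi^*$ is an isomorphism.

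Finally, unwinding surjectivity of $\pi^*$: a graded map $\varphi \colon P_x \to \Delta_y(i)$ of internal degree $0$ — equivalently a homogeneous map $P_x \to \Delta_y$ of degree $i$ — equals $\psi \circ \pi$ for some $\psi \colon \Delta_x \to \Delta_y(i)$, i.e.\ it factors through the canonical projection $P_x \twoheadrightarrow \Delta_x$. An arbitrary map $P_x \to \Delta_y(i)$ is the sum of its homogeneous components, so the claim follows for all of $\Hom(P_x, \Delta_y(i))$.

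I do not expect a serious obstacle here, since the substantive work has already been carried out in the proof of the proposition; the only point needing (minor) care is to apply the dimension equality $\dim \gHom(\Delta_x, \Delta_y(i)) = \dim \gHom(P_x, \Delta_y(i))$ in \emph{every} internal degree $i$ rather than only the unique nonzero one — in all other degrees both spaces vanish and the factorization is vacuous.
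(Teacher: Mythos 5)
Your argument is correct and is essentially the paper's own: the authors deduce the corollary by observing that the injection $\gHom(\Delta_x,\Delta_y(i)) \hookrightarrow \gHom(P_x,\Delta_y(i))$ given by precomposition with $P_x \twoheadrightarrow \Delta_x$ must be an isomorphism, since the proposition shows both spaces have the same dimension (one in the single nonzero degree, zero otherwise). Your write-up just makes this dimension count and the degree-by-degree check explicit.
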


\begin{proposition}
Let $x,y,z \in W$ with $l(z) < l(x)$ and let $\phi:P_z \rightarrow P_x(i)$ be a map. Let $f_{x,y}:P_x(i) \rightarrow \Delta_y(j)$ be a nonzero map. Then we have $f_{x,y} \circ \phi = 0$.
\end{proposition}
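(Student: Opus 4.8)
The plan is to reduce, by means of the factorization already recorded in Corollary~\ref{factor}, to the elementary fact that $L_z$ is not a composition factor of $\Delta_x$.

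First I would apply Corollary~\ref{factor} (after the evident grading shift) to the map $f_{x,y}:P_x(i)\to\Delta_y(j)$: it factors through the canonical surjection $\pi:P_x(i)\twoheadrightarrow\Delta_x(i)$, say $f_{x,y}=g\circ\pi$ for some $g:\Delta_x(i)\to\Delta_y(j)$. Then $f_{x,y}\circ\phi=g\circ(\pi\circ\phi)$, so it suffices to prove that the map $\pi\circ\phi:P_z\to\Delta_x(i)$ is zero; in fact I claim the whole space $\gHom(P_z,\Delta_x(i))$ vanishes.

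For this I would invoke the description of the weight filtration from the Lemma above: $\Delta_x$ is filtered with layer $L_x$ in degree $0$ and layer $\bigoplus_{l(w)=l(x)+k}L_w(-k)$ for each $k\ge 1$, so every composition factor $L_w$ of $\Delta_x$ satisfies $l(w)\ge l(x)$. Since $P_z$ is projective, $\gHom(P_z,-)$ is exact; applying it along this filtration (downward induction on the filtration steps) shows that $\gHom(P_z,\Delta_x(i))$ is assembled from the spaces $\gHom(P_z,L_w(m))$ with $l(w)\ge l(x)>l(z)$ and $m\in\Z$. Each such space is zero: a nonzero graded map $P_z\to L_w(m)$ would exhibit $L_w(m)$ as a simple quotient of $P_z$, forcing $w=z$, which is excluded by $l(z)<l(x)\le l(w)$. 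Hence $\gHom(P_z,\Delta_x(i))=0$, so $\pi\circ\phi=0$ and therefore $f_{x,y}\circ\phi=0$.

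There is no genuinely hard step here, but the point I would flag as the crux is the choice to factor $f_{x,y}$ through $\Delta_x$ rather than to factor the composite $f_{x,y}\circ\phi$ through $\Delta_z$: the latter route does not work, since a nonzero map $\Delta_z\to\Delta_y(j)$ is perfectly possible once $l(z)>l(y)$, and so it would not force the composite to vanish. It is also worth noting that the hypothesis $f_{x,y}\ne 0$ is never used in this argument — it is merely the form in which the proposition will be applied later; the actual input is the inequality $l(z)<l(x)$ together with the computation of the composition factors of $\Delta_x$.
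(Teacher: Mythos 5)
Your argument is correct, and it takes a slightly different route from the paper's. The paper also begins by factoring $f_{x,y}$ through $\Delta_x(i)$ via Corollary~\ref{factor}, but then it argues by contradiction on the image $\mathcal{I}$ of the composite inside $\Delta_y(j)$: on one hand $\mathcal{I}$ is a subquotient of $\Delta_x(i)$, so $[\mathcal{I}:L_z(k)]=0$ for all $k$ by the Lemma on the weight filtration; on the other hand, a second application of Corollary~\ref{factor} (to $P_z\rightarrow\Delta_y(j)$) shows that a nonzero composite would have image a quotient of $\Delta_z$, hence with $L_z$ in its head --- a contradiction. You instead prove the stronger intermediate vanishing $\gHom(P_z,\Delta_x(i))=0$, using projectivity of $P_z$ and the fact that every composition factor $L_w$ of $\Delta_x$ has $l(w)\geq l(x)>l(z)$, so that already $\pi\circ\phi=0$ before composing with $g$. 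This buys a cleaner statement (the map dies at the $\Delta_x(i)$ stage, with no contradiction argument and no second use of Corollary~\ref{factor}), at the cost of nothing; the paper's version, by contrast, isolates exactly the two facts it reuses elsewhere (factorization through the standard quotient, and the head of $\Delta_z$). Your closing observations are also accurate: factoring the composite through $\Delta_z$ alone would not suffice (the paper uses it only in tandem with the $\Delta_x$ factorization), and the hypothesis $f_{x,y}\neq 0$ is not actually needed, since the conclusion is trivial when $f_{x,y}=0$.
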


\begin{proof} 
Let ${\mathcal{I}}$ denote the image of $f_{x,y} \circ \phi$. Since $f_{x,y}$ factors through $\Delta_x(i)$, and $[\Delta_x(i):L_z(j)] = 0$ for all $j$, we have that $[\mathcal{I}:L_z(j)] = 0$ for all $j$. 

On the other hand, any nonzero map from $P_z \rightarrow \Delta_y(j)$ factors through the projection to $\Delta_z$ Hence, the image is isomorphic to a quotient of $\Delta_z$, and any quotient of $\Delta_z$ has $L_z$ as the unique simple quotient. Thus, if $f_{x,y} \circ \phi$ was a nonzero map, we would have $[\mathcal{I}:L_z] = 1$.
\end{proof}

\begin{corollary} \label{diff.zero}
Let $P^\bullet$ denote the projective resolution for $\Delta_x$ as in Proposition~\ref{proj.res}. Then we have $$\dim \gExt^j(\Delta_x,\Delta_y(i)) = \dim \gHom(P^j,\Delta_y(i)).$$
\end{corollary}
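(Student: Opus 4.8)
The plan is to compute $\gExt^j(\Delta_x,\Delta_y(i))$ directly from the projective resolution $P^\bullet$ of Proposition~\ref{proj.res}. Applying the functor $\gHom(-,\Delta_y(i))$ to
$$P^{l(x)} \rightarrow P^{l(x)-1} \rightarrow \dots \rightarrow P^0 \rightarrow 0$$
produces a cochain complex
$$0 \rightarrow \gHom(P^0,\Delta_y(i)) \rightarrow \gHom(P^1,\Delta_y(i)) \rightarrow \dots \rightarrow \gHom(P^{l(x)},\Delta_y(i)) \rightarrow 0,$$
whose cohomology at the $j$-th spot is by definition $\gExt^j(\Delta_x,\Delta_y(i))$. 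So it suffices to show that every differential in this complex vanishes; then each cohomology group equals the corresponding term and we get $\dim \gExt^j(\Delta_x,\Delta_y(i)) = \dim \gHom(P^j,\Delta_y(i))$.

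The key step is therefore to show the induced differentials are zero, and this is where the preceding proposition does the work. Since $\gHom$ is additive, the differential $\gHom(P^{j-1},\Delta_y(i)) \rightarrow \gHom(P^j,\Delta_y(i))$ decomposes into components indexed by a summand $P_{z}(-(j-1))$ of $P^{j-1}$ and a summand $P_{w}(-j)$ of $P^j$; such a component sends a map $f\colon P_{z}(-(j-1)) \rightarrow \Delta_y(i)$ to $f\circ\phi$, where $\phi\colon P_{w}(-j)\rightarrow P_{z}(-(j-1))$ is the corresponding matrix entry of the resolution differential. By the length conditions defining $P^j$ and $P^{j-1}$ we have $l(w) = l(x)-j < l(x)-(j-1) = l(z)$. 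Applying the previous proposition (with its "$z$'' taken to be our $w$, its "$x$'' our $z$, and absorbing the grading shifts into the shift parameters there) gives $f\circ\phi = 0$ whenever $f$ is nonzero, and trivially when $f = 0$; hence every component, and so every differential, vanishes.

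I do not anticipate a serious obstacle here: the content is entirely contained in the previous proposition, and what remains is the routine verification that the differentials of $\gHom(P^\bullet,\Delta_y(i))$ are assembled from exactly the compositions $P_w \to P_z \to \Delta_y$ covered by that proposition. The only point requiring a little care is the bookkeeping of grading shifts — checking that the shifts appearing in the resolution and in $\Delta_y(i)$ are compatible with the (shift-agnostic) hypotheses of the proposition — but this is purely formal.
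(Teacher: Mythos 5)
Your proof is correct and is essentially the paper's argument: the paper likewise notes that all differentials in $\gHom(P^\bullet,\Delta_y(i))$ vanish by the preceding proposition, and your component-by-component verification (with the length comparison $l(w)<l(z)$ and the shift bookkeeping) just makes explicit what the paper leaves implicit.
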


\begin{proof}
All the differentials in the complex $\gHom(P^\bullet,\Delta_y(i))$ are $0$ by the above proposition.
\end{proof}

\begin{proof} [Proof of Theorem~\ref{main}]
The theorem follows from the Corollary~\ref{diff.zero}.
\end{proof}

\subsection*{Acknowledgements}
The authors would like to thank Ben Elias for helpful discussions. The second author would like to thank Thomas Lam for interesting discussions as well as suggesting the problem.


\begin{thebibliography}{99}
\bibitem{Abe} N.~Abe, {\it First extension groups of {V}erma modules and
              {$R$}-polynomials}, J.~Lie Theory~{\bf 25} (2015), 377--393.
\bibitem{AMRW} P.~Achar, S.~Makisumi, S.~Riche and G.~Williamson, {\it Free-monodromic mixed tilting sheaves on flag varieties}, 	{\tt arXiv:1703.05843} [math.RT], 2017.
\bibitem{AR} P.~Achar and S.~Riche, {\it Modular perverse sheaves on flag varieties, {II}: {K}oszul
              duality and formality}, Duke. Math. Journal~{\bf 165} (2016), 161--215.
\bibitem{Boe}B.~D.~Boe, {A counterexample to the Gabber-Joseph conjecture}, Kazhdan-{L}usztig theory and related topics ({C}hicago, {IL},
              1989), Contemp. Math.~{\bf 139}, 1--3.
\bibitem{EW} B.~Elias and G.~Williamson, {\it The {H}odge theory of {S}oergel bimodules}, Ann. of Math. (2)~{\bf 180}, (2014), 1089--1136.

\bibitem{Humphreys} J.~E.~Humphreys, {\it Representations of semisimple lie algebras in the BGG Category $\mathcal{O}$}, Graduate Studies in Mathematics~{\bf 94}, American Mathematical Society, Providence, RI, 2008.
\bibitem{Mak} S.~Makisumi, {\it Mixed modular perverse sheaves on moment graphs}, {\tt arXiv:1703.01571} [math.RT], 2017.
\bibitem{Mak2} S. Makisumi, {\it Modular Koszul duality for Soergel bimodules}, {\tt arXiv:1703.01576} [math.RT], 2017.
\bibitem{Sau} M.~Sauerwein, {\it Koszul duality and Soergel bimodules for dihedral groups}, {\tt arXiv:1504.06545} [math.RT], 2015.
\bibitem{Soergel} W.~Soergel, {Kategorie {$\mathcal{O}$}, perverse {G}arben und {M}oduln \"uber den
              {K}oinvarianten zur {W}eylgruppe}, J.~Amer. Math. Soc.~{\bf 3} (1990), 421--445.

\end{thebibliography}
\end{document}